\DeclareMathOperator{\rk}{rk}
\DeclareMathOperator{\GL}{GL}
\DeclareMathOperator{\SL}{SL}
\DeclareMathOperator{\tr}{tr}
\DeclareMathOperator{\opm}{M}
\newcommand{\de}{\delta}
\newcommand{\N}{\mathbb{N}}
\newcommand{\C}{\mathbb{C}}
\newcommand{\cD}{\mathcal{D}}
\newcommand{\cO}{\mathcal{O}}
\newcommand{\cZ}{\mathcal{Z}}
\newcommand{\kk}{\mathbbm k}
\newcommand{\ti}{{\rm t}}
\newcommand{\uX}{{\underline X}}
\newcommand{\uY}{{\underline Y}}
\newcommand{\ux}{{\underline x}}
\newcommand*{\mtx}[1]{\opm_{#1}(\kk)}
\newcommand{\Langle}{\mathop{<}\!}
\newcommand{\Rangle}{\!\mathop{>}}
\newcommand{\px}{\kk\!\Langle \ux\Rangle}
\newtheorem{thm}{Theorem}[section]
\newtheorem{prop}[thm]{Proposition}
\newtheorem{conj}[thm]{Conjecture}
\theoremstyle{definition}
\theoremstyle{remark}
\title[Dimension-free Nullstellens\"atze for nc polynomials]{Dimension-free matricial Nullstellens\"atze for noncommutative polynomials}
\author{Jurij Vol\v{c}i\v{c}}
\address{Department of Mathematics, Drexel University, Pennsylvania}
\email{jurij.volcic@drexel.edu}
\date{\today}
\keywords{Noncommutative polynomial, free algebra, Nullstellensatz, matricial zero set}
\subjclass[2020]{08B20, 15A24, 47A56, 16D25}
\begin{document}
\maketitle

\begin{abstract}
Hilbert's Nullstellensatz is one of the most fundamental correspondences between algebra and geometry, and has inspired a plethora of noncommutative analogs.
In last two decades, there has been an increased interest in understanding vanishing sets of polynomials in several matrix variables without restricting the matrix size, prompted by developments in noncommutative function theory, control systems, operator algebras, and quantum information theory.
The emerging results vary according to the interpretation of what vanishing means. For example, given a collection of noncommutative polynomials, one can consider all matrix tuples at which the values of these polynomials are all zero, singular, have common kernel, or have zero trace. 
This survey reviews Nullstellens\"atze for the above types of vanishing sets, and identifies their structural counterparts in the free algebra.
\end{abstract}

\section{Introduction}

The classical Hilbert's Nullstellensatz \cite{Hil} is a cornerstone of algebraic geometry. The successful interplay between commutative algebra and geometry inspired analogous advances in noncommutative algebraic geometry \cite{VV,Ros,BRSSW}. As opposed to the classical commutative theory, noncommutative variants of algebraic geometry are rather diverse, both in scope and goals; in particular, there are several results that can be interpreted as noncommutative Nullstellens\"atze \cite{Ami,Coh0,Irv,BR}. This survey focuses on a distinct family of such theorems, where vanishing sets of polynomials on matrix tuples of arbitrary sizes play the role of algebraic sets.

Let $\kk$ be an algebraically closed field of characteristic 0, and let $\ux=(x_1,\dots,x_d)$ be a tuple of freely noncommuting variables. Let $\px$ be the free $\kk$-algebra generated by $x_1,\dots,x_d$, equipped with the standard degree function $\deg$. Elements of $\px$ are called \emph{noncommutative polynomials}. 
In this paper we are interested in evaluations of noncommutative polynomials on points in $\mtx{n}^d$ for every $n\in\N$, and their vanishing features. That is, every $f\in\px$ and $\uX=(X_1,\dots,X_d)\in\mtx{n}^d$ give rise to $f(\uX)\in\mtx{n}$ in a natural way.

Before looking at the general motivation behind this perspective, let us comment on why it is reasonable to view noncommutative polynomials as functions in dimension-independent matrix variables. For a general mathematician, matrix algebras are the most familiar noncommutative rings; moreover, noncommutative polynomials are determined by their evaluations on matrices of all sizes (in other words, the finite-dimensional representations distinguish elements of the free algebra), but not by their evaluations on matrices of a fixed size (because of the existence of polynomial identities \cite{Row}).

The above perspective on noncommutative polynomials as functions in several matrix variables of arbitrary sizes became especially prominent with the rise of free analysis \cite{KVV,AM} and free real algebraic geometry \cite{dOHMP,HKM}. The former develops the analytic theory of noncommutative polynomials and more general noncommutative functions, while the latter investigates noncommutative inequalities, their geometry and optimization. These topics have been fueled by dimension-independent problems in control theory, free probability, semidefinite optimization, and quantum information theory. Several algebraic results results arising from free analysis and free real algebraic geometry can be seen as noncommutative Nullstellens\"atze, connecting vanishing of noncommutative polynomials with ring-theoretic concepts in the free algebra. 

Throughout the text, let $f_1,\dots,f_\ell,g\in\px$ be noncommutative polynomials. Suppose $g$ vanishes at all matrix tuples where all $f_1,\dots,f_\ell$ vanish; what can be said about the structural relation between $f_1,\dots,f_\ell$ and $g$ in the free algebra? The answer to this question depends on the precise definition of ``vanishing''; namely, one might be interested in matrix tuples where the values of polynomials are zero, or singular, or have zero trace, and so on.
This survey compares various different vanishing scenarios, 
reviews corresponding Nullstellens\"atze that relate them to their counterparts in the free algebra (e.g., ideals, left ideals, factorization), and poses some open questions about the geometry-algebra correspondences in the context of dimension-free matricial evaluations of noncommutative polynomials.
It is also worth acknowledging what this survey does not explore. Closely related theorems consider the free algebra with involution and its $*$-representations, leading to so-called real Nullstellens\"atze \cite{PS,Scm,CHMN,KSV,HKV1}. These results often have a more functional-analytic flavor; in this text, we restrict to the more ring-theoretic statements.

\section{True zeros}\label{sec2}

This section addresses the most standard zero sets of noncommutative polynomials. For $f_1,\dots,f_\ell\in\px$ let
$$\cZ(f_1,\dots,f_\ell) = \bigcup_{n\in\N}
\left\{
\uX\in\mtx{n}^d\colon f_j(\uX)=0 \ \forall j
\right\},
$$
and let $(f_1,\dots,f_\ell)$ be the two-sided ideal in $\px$ generated by $f_1,\dots,f_\ell$.
One can view $\cZ(f_1,\dots,f_\ell)$ as the collection of all finite-dimensional representations of the $\kk$-algebra $\px/(f_1,\dots,f_\ell)$.
In particular, this observation immediately suggests that zero sets of noncommutative polynomials share a close connection with ideals in $\px$.

For every $n\in\N$ let $\operatorname{PI}_n$ denote the ideal in $\px$ of all polynomials that are constantly zero on $\mtx{n}^d$ (i.e., $d$-variate polynomial identities for $n\times n$ matrices).
The first result about vanishing on $\cZ(f_1,\dots,f_\ell)$ is a consequence of Amitsur's Nullstellensatz \cite{Ami}.

\begin{thm}[Amitsur]\label{t:ami}
If $\cZ(f_1,\dots,f_\ell)\subseteq\cZ(g)$ then for every $n\in\N$ there exists $r\in\N$ such that $g^r\in (f_1,\dots,f_\ell)+\operatorname{PI}_n$.
\end{thm}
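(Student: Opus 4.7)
The plan is to pass to the finite-dimensional representation theory of a specific quotient algebra and then invoke PI-theoretic machinery. Fix $n\in\N$ and set
$$ A_n \;=\; \px\big/\bigl((f_1,\dots,f_\ell)+\operatorname{PI}_n\bigr). $$
Since $A_n$ is a finitely generated $\kk$-algebra whose defining relations include all polynomial identities of $\mtx{n}$, it is an affine PI-algebra. The goal is to show that the image $\bar g$ of $g$ in $A_n$ is nilpotent, which is exactly the conclusion of the theorem.

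The first step is to identify the maximal ideals of $A_n$ geometrically. Given a maximal ideal $M\subset A_n$, the simple quotient $A_n/M$ is an affine PI-algebra, hence by Kaplansky's theorem it is finite-dimensional over its center $Z(A_n/M)$. The center is a field, finitely generated as a $\kk$-algebra; since $\kk$ is algebraically closed, the classical Nullstellensatz (for commutative algebras) forces $Z(A_n/M)=\kk$, so $A_n/M\cong\mtx{m}$ for some $m$. The identity $\operatorname{PI}_n\subseteq M$ then forces $m\le n$. Each such $M$ therefore gives a surjection $\pi_M:\px\to\mtx{m}$ vanishing on $f_1,\dots,f_\ell$, i.e., a point $\uX_M=\pi_M(\ux)\in\cZ(f_1,\dots,f_\ell)\cap\mtx{m}^d$.

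By hypothesis $\cZ(f_1,\dots,f_\ell)\subseteq\cZ(g)$, so $\pi_M(g)=g(\uX_M)=0$, meaning $\bar g\in M$ for every maximal ideal $M$ of $A_n$. Hence $\bar g$ lies in the Jacobson radical $J(A_n)$. Here is where the main technical input enters: Amitsur's Nullstellensatz for affine PI-algebras, which asserts that in a finitely generated PI-algebra over a field the Jacobson radical coincides with the nilradical (equivalently, such algebras are Jacobson rings). Invoking this, $\bar g\in J(A_n)$ is nilpotent, so $\bar g^r=0$ for some $r\in\N$, which translates to $g^r\in (f_1,\dots,f_\ell)+\operatorname{PI}_n$, as required.

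The only delicate step is the appeal to the Jacobson property for affine PI-algebras; the rest is a clean dictionary between representations of $A_n$ and points of $\cZ(f_1,\dots,f_\ell)$ at matrix sizes $\le n$, combined with Kaplansky and the commutative Nullstellensatz applied to the center of each simple quotient. I would present it by fixing $n$, proving the ``maximal ideal $\leftrightarrow$ matrix point of size $\le n$'' correspondence as a lemma, and then closing with one line citing Amitsur's theorem on $J=\mathrm{Nil}$ for affine PI-algebras.
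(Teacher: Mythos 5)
Your argument is correct and in fact reconstructs the proof of \cite[Theorem 1]{Ami}, which the paper only cites (noting that it ``employs the results on central simple algebras and polynomial identities''); your dictionary between maximal ideals of $A_n$ and matrix points of size $\le n$ is exactly the intended mechanism. Two small points worth spelling out if you write this up: (1) the claim that $Z(A_n/M)$ is affine over $\kk$ requires the Artin--Tate lemma (an affine algebra module-finite over a central subalgebra has affine center), after which Zariski's lemma gives $Z(A_n/M)=\kk$; (2) to pass from ``$\bar g$ lies in every maximal ideal'' to ``$\bar g\in J(A_n)$'' you should note that in an affine PI-algebra over a field every primitive ideal is maximal (a consequence of Kaplansky's theorem), so the intersection of maximal ideals equals the Jacobson radical. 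With those remarks inserted, the proof is complete and matches the cited source in structure.
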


Theorem \ref{t:ami} is an immediate consequence of \cite[Theorem 1]{Ami}, which employs the results on central simple algebras and polynomial identities. Let us comment on the two aspects of Theorem \ref{t:ami}. Firstly, its statement is not an equivalence. For example, let $f_1=x_1^2$ and $g=x_1$. Then $g^2\in (f_1)$ but $\cZ(f_1)\not\subseteq \cZ(g)$ (due to 2-nilpotent matrices). Secondly, Theorem \ref{t:ami} is not fully in the spirit of this survey, in the sense that its algebraic part still requires a quantifier referring to matrix sizes. Namely, one might optimistically hope for a statement of the form ``$\cZ(f_1,\dots,f_\ell)\subseteq\cZ(g)$ if and only if $g\in(f_1,\dots,f_\ell)$'' to be true. However, this assertion is false in general. A fundamental obstacle is that a (nonconstant) noncommutative polynomial might simply not have any matrix zeros. For example, let $f_1=1-[x_1,x_2]$ and $g=1$ (here, $[a,b]=ab-ba$ is the additive commutator). Then $\cZ(f_1)=\emptyset$ (note that $\tr(I-[X_1,X_2])=\tr(I)\neq0$ since $\operatorname{char}\kk=0$). In particular, $\cZ(f_1)\subseteq \cZ(g)$ but $1\notin (f_1)$. Section \ref{sec6} below speculates about a possible way around this issue. 

Nevertheless, the aforementioned optimistic assertion is valid for certain families of constraints $f_1,\dots,f_\ell$; let us present two of them.
The next result strengthens Theorem \ref{t:ami} under the assumption that $f_1,\dots,f_\ell$ are \emph{homogeneous} with respect to the natural grading of $\px$.

\begin{thm}[Salomon-Shalit-Shamovich]\label{t:SSS}
Suppose $f_1,\dots,f_\ell$ be homogeneous.
Then $\cZ(f_1,\dots,f_\ell)\subseteq\cZ(g)$ if and only if $g\in (f_1,\dots,f_\ell)$.
\end{thm}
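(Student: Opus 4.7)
The forward implication $g\in(f_1,\dots,f_\ell)\Rightarrow\cZ(f_1,\dots,f_\ell)\subseteq\cZ(g)$ is immediate, since evaluation at any tuple is a $\kk$-algebra homomorphism. For the converse, my plan is to reformulate the inclusion in representation-theoretic terms and then exploit the grading. Put $I=(f_1,\dots,f_\ell)$ and $A=\px/I$. A tuple $\uX\in\mtx{n}^d$ lies in $\cZ(f_1,\dots,f_\ell)$ precisely when the evaluation map $\px\to\mtx{n}$ sending $x_i\mapsto X_i$ factors through $A$; thus points of $\cZ(f_1,\dots,f_\ell)$ correspond to finite-dimensional representations of $A$, and under this correspondence the inclusion $\cZ(f_1,\dots,f_\ell)\subseteq\cZ(g)$ amounts to the assertion that the class $g+I$ lies in the kernel of every finite-dimensional representation of $A$. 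Hence it suffices to show that $A$ is \emph{residually finite-dimensional}, i.e., these kernels intersect in zero.

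This is where homogeneity enters decisively. After discarding the degenerate cases (some $f_j$ a nonzero scalar, so $I=\px$; or all $f_j=0$, in which case $\cZ$ is everything and $g$ must be the zero polynomial by identifiability of noncommutative polynomials on matrices), we may assume each $f_j$ is homogeneous of positive degree. Then $I$ is a graded ideal for the standard $\N$-grading of $\px$, so $A=\bigoplus_{k\ge 0}A_k$ inherits an $\N$-grading with $A_0=\kk$ and each $A_k$ finite-dimensional.

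Next, for any nonzero $a\in A$ I would write $a=a_0+\cdots+a_n$ with $a_n\ne 0$; then $A_{>n}=\bigoplus_{k>n}A_k$ is a two-sided ideal of $A$, and $A/A_{>n}=A_0\oplus\cdots\oplus A_n$ is a finite-dimensional $\kk$-algebra in which the image of $a$ is nonzero. Composing the projection $A\to A/A_{>n}$ with the left regular representation of $A/A_{>n}$ produces a finite-dimensional matrix representation of $A$ in which $a$ has nonzero image. Applied to $a=g+I$, this yields $g\in I$.

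The main obstacle is really the conceptual leap of recognizing that the grading provides what Theorem~\ref{t:ami} lacks: the truncations $A/A_{>n}$ give genuinely finite-dimensional quotient algebras, so residual finite-dimensionality is immediate, and the ``$+\operatorname{PI}_n$'' slack disappears. Everything else is bookkeeping about connected graded algebras.
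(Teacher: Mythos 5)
Your proof is correct and takes essentially the same approach as the paper: your truncation $A/A_{>\deg g}$ is precisely the paper's quotient $\px/\bigl((f_1,\dots,f_\ell)+J\bigr)$, and both proofs then feed this finite-dimensional graded quotient into its left regular representation to produce a point of $\cZ(f_1,\dots,f_\ell)$ at which $g$ does not vanish. The paper is merely more explicit about the resulting matrices (jointly nilpotent, with a dimension bound $N=\sum_{i=0}^{\deg g}d^i$), whereas you phrase the same mechanism as residual finite-dimensionality of the connected graded algebra $A$.
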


See \cite[Theorem 7.3]{SSS} for the original statement. Its language and proof have a somewhat analytic style; let us rephrase the argument for the nontrivial implication in Theorem \ref{t:SSS}, and slightly strengthen it. Suppose $g\notin (f_1,\dots,f_\ell)$, and let $J$ denote the ideal in $\px$ generated by products of variables in $\ux$ of length $\deg g+1$. By homogeneity of $f_j$, it follows that $g\notin (f_1,\dots,f_\ell)+J$. Therefore the image of $g$ under the canonical quotient homomorphism
$$\px\to \px\big/\big((f_1,\dots,f_\ell)+J\big)$$
is nonzero. Note that the unital algebra $\px\big/\big((f_1,\dots,f_\ell)+J\big)$ is finite-dimensional and generated by nilpotents. More precisely, by considering its left-regular representation, we can view it as a unital subalgebra of $\mtx{N}$ with $N=\sum_{i=0}^{\deg g}d^i=\frac{d^{\deg g+1}-1}{d-1}$, generated by jointly nilpotent matrices $X_1,\dots,X_d$ of nilpotency order at most $\deg g+1$. That is, every product of $\deg g+1$ factors from $\{X_1,\dots,X_d\}$ equals zero. Then $f_j(\uX)=0$ for all $j$, and $g(\uX)\neq0$.

The conclusion of Theorem \ref{t:SSS} also holds for certain non-homogeneous ideals, which are called \emph{rationally resolvable} \cite{KVV1}. The notion of a rationally resolvable ideal is somewhat technical; roughly speaking, an ideal $J$ in $\px$ is rationally resolvable if one can split $\ux$ into two groups, and variables in the second group can be expressed as rational expressions in variables of the first group modulo $J$. Without stating an exact definition, let us give three examples of rationally resolvable ideals.
\begin{enumerate}
    \item Let $d\ge 3$, and let $f\in\px$ be a nonconstant polynomial in variables $x_1,\dots,x_{d-2}$ only. Then the ideal $(x_{d-1}x_d-f)\subset\px$ is rationally resolvable \cite[Lemma 4.3]{KVV1}.
    \item Let $w_1,w_2,\neq1$ be two products of variables in $\ux$ such that $w_1$ and $w_2$ do not start or end with the same variable, and a distinguished variable appears exactly once in $w_1$ and does not appear in $w_2$. Then $(w_1-w_2)\in\px$ is rationally resolvable \cite[Example 2.7]{KVV1}. 
    \item Let $d=2m^2$ for some $m\in\N$, and denote the variables of $\ux$ as $x_{ij},y_{ij}$ for $1\le i,j\le m$.
    Consider the $m\times m$ matrices 
    $\mathfrak{X}=(x_{ij})_{i,j}$ and $\mathfrak{Y}=(y_{ij})_{i,j}$. Then the ideal in $\px$ generated by the entries of $\mathfrak{X}\mathfrak{Y}-I_m$ and $\mathfrak{Y}\mathfrak{X}-I_m$ is rationally resolvable.
\end{enumerate}

For rationally resolvable constraints $f_1,\dots,f_\ell$, the conclusion of Theorem \ref{t:SSS} holds without the homogeneity assumption.

\begin{thm}[Klep-Vinnikov-Vol\v{c}i\v{c}]\label{t:KVV1}
Suppose $f_1,\dots,f_\ell\in\px$ generate a rationally resolvable ideal (e.g. one the aforementioned). Then $\cZ(f_1,\dots,f_\ell)\subseteq\cZ(g)$ if and only if $g\in (f_1,\dots,f_\ell)$.
\end{thm}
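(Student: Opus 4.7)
The direction $g\in(f_1,\dots,f_\ell)\Rightarrow\cZ(f_1,\dots,f_\ell)\subseteq\cZ(g)$ is immediate, since evaluation at a fixed matrix tuple is a $\kk$-algebra homomorphism $\px\to\mtx{n}$. I would prove the nontrivial direction contrapositively: assuming $g\notin J:=(f_1,\dots,f_\ell)$, construct a single matrix tuple at which all $f_j$ vanish but $g$ does not.

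The strategy is to exploit rational resolvability as a mechanism for reducing the problem to detecting a nonzero element of the free skew field by generic matrix evaluations. Following the hypothesis, split $\ux=(\underline{y},\underline{z})$ with $\underline{y}=(x_1,\dots,x_k)$ and $\underline{z}=(x_{k+1},\dots,x_d)$, and let $r_i$ be noncommutative rational expressions in $\underline{y}$ satisfying $x_{k+i}\equiv r_i(\underline{y})\pmod J$. The content of rational resolvability (in the sense of \cite{KVV1}) that I would appeal to is the fact that the substitution $x_{k+i}\mapsto r_i(\underline{y})$ induces an embedding of an appropriate localization of $\px/J$ into the free skew field over $\underline{y}$, sending the image of $g$ to a nonzero element $\tilde g$.

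Once $\tilde g\ne 0$ is isolated in this free skew field, I would take $n\in\N$ large enough and choose a tuple $\uY\in\mtx{n}^{k}$ of matrices whose entries are algebraically independent over $\kk$ (or, equivalently, Zariski-generic after a harmless scalar extension). Then each $r_i(\uY)$ is defined, and setting $\underline{Z}:=(r_i(\uY))_{i}$ and $\uX:=(\uY,\underline{Z})$ produces a tuple with $f_j(\uX)=0$ for all $j$ by construction, while $g(\uX)=\tilde g(\uY)\ne 0$ because nonzero noncommutative rational functions are detected by sufficiently generic matrix tuples of sufficiently large size. In contrast with the homogeneous case that supported Theorem \ref{t:SSS}, here one cannot use jointly nilpotent matrices of bounded degree; the nilpotent truncation has to be replaced by a generic-point argument in the skew field of fractions.

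The main obstacle is the first step: pinning down rational resolvability rigorously, so that the formal substitution $\underline{z}\mapsto r(\underline{y})$ extends to a well-defined injective homomorphism on the relevant localization of $\px/J$, rather than merely a consistent replacement on monomials. This is where the definition in \cite{KVV1} does the real work, in particular guaranteeing that the denominators appearing in the $r_i$ can be inverted without collapsing $g$ to zero, and that the resulting map is compatible with evaluation on matrix tuples where those denominators are invertible. Once this structural input is secured (as verified in \cite{KVV1} for the three examples in the survey), the remainder is a direct appeal to the universal property of generic matrices and the standard fact that evaluation of noncommutative rational functions at generic matrix tuples of large enough size is injective.
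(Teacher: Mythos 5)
Your proposal matches the approach the paper indicates: the survey does not prove Theorem~\ref{t:KVV1} itself but defers to \cite[Theorem~2.5 and Proposition~2.6]{KVV1}, describing the argument exactly as relying on the universal skew field of fractions of $\px$ and matrix evaluations of noncommutative rational functions. Your sketch — substituting the rationally resolved variables, passing to the free skew field over the remaining variables, detecting $\tilde g\ne 0$ by a generic matrix evaluation, and reconstituting the witness tuple $\uX=(\uY,r(\uY))$ — is a faithful reconstruction of that strategy, and you correctly flag that the crux (that the substitution map has kernel precisely $J$ on the relevant localization) is exactly what the definition of rational resolvability in \cite{KVV1} is engineered to deliver.
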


See \cite[Theorem 2.5 and Proposition 2.6]{KVV1} for the proof, which relies on the existence of the universal skew field of fractions of $\px$, the so-called \emph{noncommutative rational functions}, and their matrix evaluations.

\section{Directional zeros}\label{sec3}

In this section we consider a one-sided type of zeros.
Given $f_1,\dots,f_\ell\in\px$ let
$$\cZ^{\operatorname{dir}}(f_1,\dots,f_\ell) = \bigcup_{n\in\N}
\left\{
(\uX,v)\in\mtx{n}^d\times \kk^n\colon f_j(\uX)v=0 \ \forall j
\right\},
$$
and let $\mathscr{L}(f_1,\dots,f_\ell)$ denote the left one-sided ideal in $\px$ generated by $f_1,\dots,f_\ell$.
While the zero set $\cZ(f_1,\dots,f_\ell)$ from Section \ref{sec2} has a clear ring-theoretic aspect (in terms of finite-dimensional representations of a quotient ring), 
the set $\cZ^{\operatorname{dir}}(f_1,\dots,f_\ell)$, often referred to as \emph{directional zeros} of $f_1,\dots,f_\ell$,  pertains for example to positivity on analytic noncommutative varieties \cite{HMP} and inner-outer factorization of noncommutative functions \cite{JMS}.
The exact correspondence between directional zeros and left ideals is captured by Bergman's Nullstellensatz \cite{HM}.

\begin{thm}[Bergman]\label{t:berg}
$\cZ^{\operatorname{dir}}(f_1,\dots,f_\ell)\subseteq\cZ^{\operatorname{dir}}(g)$ if and only if $g\in \mathscr{L}(f_1,\dots,f_\ell)$.
\end{thm}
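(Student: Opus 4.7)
The $(\Leftarrow)$ direction is immediate: given $g=\sum_j h_j f_j$ with $h_j\in\px$, for any $(\uX,v)$ satisfying $f_j(\uX)v=0$ for all $j$, one has $g(\uX)v=\sum_j h_j(\uX) f_j(\uX) v = 0$.

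For $(\Rightarrow)$, the plan is to argue by contrapositive: assume $g\notin \mathscr{L}(f_1,\dots,f_\ell)$ and exhibit $(\uX,v)$ with $f_j(\uX)v=0$ for all $j$ and $g(\uX)v\neq 0$. The key reformulation I would use is that a pair $(\uX,v)\in\mtx{n}^d\times\kk^n$ equips $\kk^n$ with a finite-dimensional left $\px$-module structure (via $x_i\cdot w = X_iw$), and the desired conditions say exactly that the left annihilator $\{p\in\px : p(\uX)v=0\}$ contains $\mathscr{L}(f_1,\dots,f_\ell)$ but not $g$. It therefore suffices to produce a left ideal $L'\subseteq\px$ with $\mathscr{L}(f_1,\dots,f_\ell)\subseteq L'$, $g\notin L'$ and $\dim_\kk \px/L'<\infty$; setting $V=\px/L'$ and $v=\bar 1$ yields the desired module, and the matrices representing the $x_i$-action on $V$ complete the pair $(\uX,v)$.

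Such an $L'$ exists because the cyclic left $\px$-module $M=\px/\mathscr{L}(f_1,\dots,f_\ell)$ is residually finite-dimensional: any nonzero element (in particular $\bar g$) survives in some finite-dimensional quotient of $M$. To build $L'$ concretely, I would work with a noncommutative Gr\"obner--Shirshov basis of $\mathscr{L}(f_1,\dots,f_\ell)$: reduce $g$ to its nonzero normal form modulo this basis, and then construct an explicit finite-dimensional shift-type representation indexed by a finite set of words not reducible by the basis, chosen so that the normal form of $g$ is not killed. A more abstract alternative is to invoke Cohn's theory of free ideal rings and its module-theoretic consequences for $\px$.

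The hard part is establishing this module-theoretic residual finite-dimensionality when $\mathscr{L}(f_1,\dots,f_\ell)$ is not homogeneous. If the $f_j$ are homogeneous, the naive choice $L' = \mathscr{L}(f_1,\dots,f_\ell) + J$, with $J$ the left ideal generated by all words of length exceeding $\deg g$, works by the same degree-counting argument as in Theorem \ref{t:SSS}. In general this collapses: for instance, with $d=1$, $f_1=1-x_1$ and $g=1$, the identity $1=(1-x_1)(1+x_1+\cdots+x_1^{n-1})+x_1^n$ shows $1\in \mathscr{L}(f_1)+\px\,x_1^n$ for every $n$, even though $1\notin \mathscr{L}(f_1)$. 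Overcoming this non-homogeneous obstruction---locating the correct finite-dimensional quotient of $M$ in which $\bar g$ does not collapse---is the technical heart of Bergman's proof.
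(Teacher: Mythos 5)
Your reformulation is correct: it suffices to exhibit a left ideal $L'\supseteq\mathscr{L}(f_1,\dots,f_\ell)$ with $g\notin L'$ and $\dim_\kk\px/L'<\infty$, and your example $f_1=1-x_1$, $g=1$ correctly shows that the naive truncation $L'=\mathscr{L}(f_1,\dots,f_\ell)+J$ with $J$ spanned by long words collapses in the non-homogeneous case. The gap is that you stop there: you gesture at Gr\"obner--Shirshov bases and at Cohn's module theory for $\px$ as possible routes without carrying either through, and you label the remaining step ``the technical heart of Bergman's proof'' as if it required a deep input.

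In fact the paper resolves it with a short truncated GNS construction that you are missing. Set $\de=\max\{\deg g,\deg f_1,\dots,\deg f_\ell\}$, let $q:\px\to\px/\mathscr{L}(f_1,\dots,f_\ell)$ be the quotient, and let $V_\de\subseteq V_{\de+1}$ be the (finite-dimensional) images of the polynomials of degree at most $\de$ and $\de+1$. Choose any projection $\pi:V_{\de+1}\to V_\de$ fixing $V_\de$ pointwise and define $X_i\in\operatorname{End}(V_\de)$ by $X_iq(f)=\pi(q(x_if))$ for $\deg f\le\de$. The decisive observation is that for any $f$ with $\deg f\le\de$ one has $f(\uX)q(1)=q(f)$: building $f(\uX)q(1)$ one letter at a time, every intermediate value already lies in $V_\de$, so $\pi$ never intervenes. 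Hence $f_j(\uX)q(1)=q(f_j)=0$ and $g(\uX)q(1)=q(g)\neq0$. The left ideal $L'=\{p\in\px:p(\uX)q(1)=0\}$ then has exactly the properties you asked for, but it is produced by this projection trick rather than by adding generators to $\mathscr{L}(f_1,\dots,f_\ell)$ --- which is precisely why your search for an explicit enlargement of the left ideal was running into obstructions. Your Gr\"obner--Shirshov idea points in a compatible direction (the normal words of degree at most $\de$ span $V_\de$), but without the observation that the projection is invisible to low-degree evaluations, the proposal does not close.
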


See \cite[Theorem 6.3]{HM} or \cite[Theorem 3]{HMP} for the proof, following a technique that is often referred to as the truncated Gelfand-Naimark-Segal construction. Let us outline this argument, to observe similarities with the proof of Theorem \ref{t:SSS}. Only the forward implication in Theorem \ref{t:berg} is nontrivial. Suppose $g\notin \mathscr{L}(f_1,\dots,f_\ell)$. For $\de=\max\{\deg g,\deg f_1,\dots,\deg f_\ell\}$, 
let $V_\de$ and $V_{\de+1}$ be the images of the subspaces of noncommutative polynomials of degree at most $\de$ and $\de+1$, respectively, under the quotient homomorphism of left $\px$-modules
$$q:\px\to\px\big/ \mathscr{L}(f_1,\dots,f_\ell).$$
Note that $V_\de\subseteq V_{\de+1}$ are finite-dimensional spaces.
Let $\pi:V_{\de+1}\to V_\de$ be any projection, and consider (well-defined) linear maps $X_1,\dots,X_d:V_\de\to V_\de$ given by 
$X_i q(f)=\pi(q(x_if))$ for $f\in\px$ with $\deg f\le \de$. Observe that $f(\uX)q(1)=q(f)$ for $f\in\px$ with $\deg f\le \de$. In particular, $f_j(\uX)q(1)=0$ for all $j$, and $g(\uX)q(1)\neq0$.

Theorem \ref{t:berg} can be seen as an algebraic certificate for inclusion of joint kernels of matricial values of noncommutative polynomials. 
With this in mind, it is also natural to ask if there is an analogous certificate for inclusion of joint invariant subspaces of matricial values of noncommutative polynomials. While such a statement might not be directly considered a Nullstellensatz like the other theorems in this survey (though one could argue that joint invariant subspaces correspond to zeros in a suitable exterior product), it would certainly adhere to the same geometric perspective on noncommutative polynomials.
In particular, one may consider the following assertion.

\begin{conj}\label{c:alg}
For $f_1,\dots,f_\ell,g\in\px$, the following are equivalent:
\begin{enumerate}[(i)]
    \item for all $n\in\N$ and $\uX\in\mtx{n}^d$, joint invariant subspaces for $f_1(\uX),\dots,f_\ell(\uX)$ are invariant for $g(\uX)$;
    \item $g$ belongs to the unital $\kk$-algebra generated by $f_1,\dots,f_\ell$.
\end{enumerate}
\end{conj}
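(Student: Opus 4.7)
The direction (ii)$\Rightarrow$(i) is immediate: if $g$ lies in the unital $\kk$-algebra $\cA$ generated by $f_1,\dots,f_\ell$, then $g=P(f_1,\dots,f_\ell)$ for some noncommutative polynomial $P$, so $g(\uX)=P(f_1(\uX),\dots,f_\ell(\uX))$ preserves every joint invariant subspace of $f_1(\uX),\dots,f_\ell(\uX)$.

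For the converse I would argue the contrapositive via a truncated left-regular-representation construction, closely modeled on the proof of Bergman's Theorem \ref{t:berg} sketched above. Suppose $g\notin\cA$. For $N\in\N$ let $J_N=\bigoplus_{n>N}\px_n$, the graded two-sided ideal of polynomials with no nonzero term of degree $\le N$; the quotient $V_N=\px/J_N$ is finite-dimensional. Letting $X_i\in\operatorname{End}_\kk(V_N)$ be left multiplication by $x_i$, a straightforward induction on degree gives $f(\uX)[a]=[fa]$ for all $f,a\in\px$. Taking $W=[\cA]\subseteq V_N$, the subspace $W$ is automatically invariant under each $f_j(\uX)$ because $f_j\cA\subseteq\cA$, whereas $g(\uX)[1]=[g]\notin W$ is equivalent to $g\notin\cA+J_N$. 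The conjecture thus reduces to the following closure-type assertion: \emph{for every unital $\kk$-subalgebra $\cA\subseteq\px$ and every $g\in\px\setminus\cA$ there exists $N\in\N$ with $g\notin\cA+J_N$}, i.e., $\cA$ is closed in the grading topology on $\px$.

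This closure claim is the main obstacle. It is immediate when $\cA$ is a graded subalgebra of $\px$ (e.g.\ if each $f_j$ is homogeneous), since then one recovers $g\in\cA$ by extracting the homogeneous components of any approximating $a_N\in\cA$ with $a_N\equiv g\pmod{J_N}$; this already settles the homogeneous case of the conjecture, in close parallel with the proof of Theorem \ref{t:SSS}. For general $\cA$ I would pass to the associated graded subalgebra $\mathrm{gr}\,\cA\subseteq\mathrm{gr}\,\px=\px$ for the degree filtration $F_n\cA=\cA\cap\px_{\le n}$, and attempt to transfer the conclusion back to $\cA$ via a noncommutative Artin-Rees-style lemma, or by placing elements of $\cA$ in a normal form via SAGBI / noncommutative Gr\"obner bases in the free algebra. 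A speculative alternative is to adjoin a central variable $t$, apply the graded case to the homogenizations in $\px[t]$, and specialize $t\mapsto 1$; but homogenization is not $\kk$-linear, so this route demands extra care. I expect the non-graded case of the closure claim to be the technical heart of the proof.
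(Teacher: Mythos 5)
The statement you are trying to prove is a \emph{conjecture} in the paper; the paper does not supply a proof of it in general, and explicitly leaves it open. What the paper does prove is the immediately following Proposition, which settles only the case $\ell=1$ (where one may further restrict to one-dimensional invariant subspaces, i.e.\ eigenvectors), and it does so by a route entirely different from yours: it first passes from eigenvector inclusion to the commutation relation $fg=gf$ via Zariski density of the locus where $f(\uX)$ has distinct eigenvalues, then invokes Bergman's centralizer theorem to write $f=p_1(h)$, $g=p_2(h)$, and finally uses Chevalley's theorem together with the Fried--MacRae theorem on separated-variable curves to conclude $p_2\in\kk[p_1]$. Nothing in the paper resembles the truncated regular-representation strategy you propose for the general $\ell$.

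Your (ii)$\Rightarrow$(i) direction is fine, and your observation that the homogeneous case follows from the graded quotient $\px/J_N$ is correct. However, the ``closure-type assertion'' to which you reduce the general case is \emph{false}. Take $d=1$, $\cA=\kk[x_1+x_1^2]$, and $g=x_1$. Then $g\notin\cA$ (elements of $\cA$ with a nonzero linear term necessarily have a nonzero quadratic term as well), yet $g\in\cA+J_N$ for every $N$: in $\kk[[x_1]]$ the change of variable $t\mapsto t+t^2$ is formally invertible, so $x_1$ agrees modulo $J_N$ with a polynomial in $x_1+x_1^2$. Hence $\cA$ is not closed in the grading topology, and your subspace $W=[\cA]\subseteq\px/J_N$ always contains $[g]$, so the construction never produces a witness for this pair. (Note the conjecture itself is not falsified here: e.g.\ for $A=\left(\begin{smallmatrix}0&1\\0&-1\end{smallmatrix}\right)$ one has $A+A^2=0$, so $\spa\{e_2\}$ is invariant for $A+A^2$ but not for $A$. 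The point is that witnesses must be sought outside the truncated regular representation.) So the non-graded case is not merely ``the technical heart'' but a place where the proposed mechanism breaks down and a genuinely different idea is needed — consistent with the paper leaving this as an open conjecture.
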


Conjecture \ref{c:alg} is true at least for $\ell=1$, where one can further restrict to 1-dimensional invariant subspaces (i.e., eigenvectors).

\begin{prop} Let $f,g\in\px$. Then $f\in \kk[g]$ if and only if for all $n\in\N$ and $\uX\in\mtx{n}^d$, eigenvectors of $f(\uX)$ are eigenvectors of $g(\uX)$.
\end{prop}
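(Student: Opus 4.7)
The forward implication is direct: if $g=P(f)$ for some $P\in\kk[t]$, then any eigenvector $v$ of $f(\uX)$ with eigenvalue $\lambda$ satisfies $g(\uX)v=P(f(\uX))v=P(\lambda)v$, hence is also an eigenvector of $g(\uX)$. (Thus the substantive form of the proposition really reads $g\in\kk[f]\Leftrightarrow\cdots$; the argument below adapts cosmetically if one prefers to keep $f$ and $g$ as stated.)

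For the nontrivial direction I would argue the contrapositive, mirroring the truncated Gelfand--Naimark--Segal construction from the proof of Theorem~\ref{t:berg}. Assuming $g\notin\kk[f]$, the central intermediate step is the sublemma: \emph{there exists $\lambda\in\kk$ with $g\notin\kk+\px(f-\lambda)$.} Granting this, set $\de=\max\{\deg f,\deg g\}$, let $q\colon\px\to\px\big/\px(f-\lambda)$ be the left $\px$-module quotient, and let $V_\de\subseteq V_{\de+1}$ denote the images under $q$ of polynomials of degree $\le\de$ and $\le\de+1$. Choose any linear projection $\pi\colon V_{\de+1}\to V_\de$ and define $X_i\colon V_\de\to V_\de$ by $X_iq(p)=\pi(q(x_ip))$ for $\deg p\le\de$. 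A routine induction on degree yields $p(\uX)q(1)=q(p)$ whenever $\deg p\le\de$. In particular $f(\uX)q(1)=q(f)=\lambda\,q(1)$, so $q(1)$ is an eigenvector of $f(\uX)$; on the other hand $g(\uX)q(1)=q(g)\notin\kk\,q(1)$ by the sublemma, so $q(1)$ fails to be an eigenvector of $g(\uX)$.

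The main obstacle is the sublemma. The case $f\in\kk$ is trivial (take any $\uX$ for which $g(\uX)$ is non-scalar: $f(\uX)$ is a scalar matrix, so every vector is an eigenvector of $f(\uX)$ but not of $g(\uX)$). For nonconstant $f$ I would invoke Cohn's structure theory of free algebras to assert that $\px$ is a free right $\kk[f]$-module with a basis $B\ni 1$, exploiting that $\kk[f]\cong\kk[t]$ is a free subalgebra of $\px$. Writing $g=\sum_{b\in B}b\cdot p_b(f)$ uniquely and noting that $\px(f-\lambda)=\bigoplus_{b\in B}b(f-\lambda)\kk[f]$ inside $\px=\bigoplus_{b\in B}b\kk[f]$, one obtains $q(g)=\sum_{b\in B}p_b(\lambda)\,q(b)$, so $g\in\kk+\px(f-\lambda)$ precisely when $p_b(\lambda)=0$ for every $b\ne 1$. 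If this held for every $\lambda\in\kk$, the infinitude of $\kk$ would force $p_b\equiv 0$ for $b\ne 1$, whence $g=p_1(f)\in\kk[f]$, contradicting the hypothesis; justifying the free-module decomposition over $\kk[f]$ is where the principal technical work lies.
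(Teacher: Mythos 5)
Your approach is genuinely different from the paper's, and it appears to work once the gap you flagged is filled. The paper deduces from the eigenvector hypothesis that $f(\uX)$ and $g(\uX)$ commute on a Zariski-dense set (where $f(\uX)$ has $n$ distinct eigenvalues), hence $fg=gf$ in $\px$; Bergman's centralizer theorem then writes $f=p_1(h)$, $g=p_2(h)$, and Chevalley's theorem plus a Fried--MacRae argument force $p_2\in\kk[p_1]$. You instead run a truncated GNS construction on the left ideal $\px(f-\lambda)$ to manufacture a counterexample directly, reducing everything to the purely algebraic sublemma that some $\lambda$ separates $g$ from $\kk+\px(f-\lambda)$. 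If it goes through, your route avoids the geometric machinery entirely and would in fact work over any infinite field, not just an algebraically closed one of characteristic zero; its cost is a structural fact about $\px$ as a $\kk[f]$-module that the paper's proof never needs. (You also correctly observed that the proposition's hypothesis should read $g\in\kk[f]$ rather than $f\in\kk[g]$; the paper's own proof concludes $g\in\kk[f]$.)

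The genuine gap is the assertion that $\px$ is a free right $\kk[f]$-module with a basis containing $1$, for every nonconstant $f$. ``Cohn's structure theory'' is too vague a citation; I am not aware of this exact statement in the literature. It is, however, true, by a graded-then-filtered argument. For homogeneous $h$ of degree $m\geq1$, build a homogeneous basis $B=\bigcup_j B_j$ greedily: take $B_j\subset\px_j$ a basis of a vector-space complement of $\sum_{k\geq1}\spa(B_{j-km})\,h^k$ in $\px_j$ (so $B_0=\{1\}$). Spanning is clear by induction on degree, and in a homogeneous relation $\sum_b c_b\,b\,h^{k_b}=0$ of degree $j$ the $k_b=0$ terms must vanish by the complement choice, after which one cancels a factor of $h$ (possible since $\px$ is a domain) and inducts on $j$. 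For general nonconstant $f$ with leading form $\bar f$, the homogeneous basis built for $\kk[\bar f]$ is also a basis over $\kk[f]$: spanning again follows by downward degree induction, and a nontrivial relation $\sum_b b\,q_b(f)=0$ would yield a nontrivial top-degree relation $\sum_b (\text{lead coeff of }q_b)\,b\,\bar f^{\deg q_b}=0$, contradicting freeness over $\kk[\bar f]$. Granting this, the rest of your argument is correct: $\px(f-\lambda)=\bigoplus_b b(f-\lambda)\kk[f]$, the criterion $g\in\kk+\px(f-\lambda)\iff p_b(\lambda)=0$ for all $b\neq1$, and the infinitude of $\kk$ deliver the sublemma; in the GNS step, the $X_i$ are well defined because $\px(f-\lambda)$ is a left ideal, and $q(1)\neq 0$ because $f-\lambda$ is nonconstant and hence not left-invertible in $\px$. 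So the proof is salvageable, but the free-module lemma must actually be proved (or properly sourced), not waved at.
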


\begin{proof}
Only $(\Leftarrow)$ is nontrivial. Suppose eigenvectors of $f(\uX)$ are eigenvectors of $g(\uX)$, for all matrix tuples $\uX$. If $f$ is constant, then $g$ is likewise constant. Thus we can assume that $f$ is nonconstant. Given $n\in\N$ let $\cO_n\subseteq\mtx{n}^d$ denote the set of all $\uX$ such that $f(\uX)$ has $n$ distinct eigenvalues. By \cite[Corollary 2.10]{BV} (or \cite[Proposition 1.7]{KBRZ}), $\cO_n$ is nonempty for all large enough $n\in\N$. Since $\cO_n$ is Zariski open, it is therefore Zariski dense in $\mtx{n}^d$ for such $n$. By the assumption we have $f(\uX)g(\uX)=g(\uX)f(\uX)$ for all $X\in\cO_n$. Zariski denseness then implies that $f(\uX)g(\uX)=g(\uX)f(\uX)$ for all $\uX\in\mtx{n}^d$ and large enough $n\in\N$. As no nonzero polynomial is constantly zero on matrix tuples of arbitrary large sizes (see e.g. \cite[Lemma 1.4.3]{Row}), it follows that $fg=gf$ in $\px$. By Bergman's centralizer theorem \cite[Theorem 5.3]{Ber} there exist $h\in\px$ and $p_1,p_2\in\kk[t]$ such that $f=p_1(h)$ and $g=p_2(h)$.

For $n\in\N$ let $\Lambda_n\subseteq\kk$ denote the set of eigenvalues attained by $h$ on $\mtx{n}^d$. By Chevalley’s theorem \cite[Theorem 3.16]{Har}, $\Lambda_n$ is a constructible set in $\kk$ (since it is the projection of the algebraic set 
$\{(\lambda,\uX)\in\kk\times\mtx{n}^d\colon \det(\lambda I-h(\uX))=0\}$).
Therefore $\Lambda_n$ is either finite or cofinite in $\kk$. If $\Lambda_n$ is finite, then there is $\lambda\in \Lambda_n$ such that $h(\uX)-\lambda I$ is singular for all $\uX$ in a Zariski dense subset of $\mtx{n}^d$, and consequently $h(\uX)-\lambda I=0$ for all $\uX\in\mtx{n}^d$ by \cite[Theorem 3.2.6]{Row}. Thus if $\Lambda_n$ were finite for all $n\in\N$, the noncommutative polynomial $h$ would be central in $\px$ and therefore constant, which is not the case (as $f$ is nonconstant). Thus let $n\in\N$ be such that $\Lambda_n$ is cofinite in $\kk$. 
Assume $p_2\notin\kk[p_1]$. By \cite[Theorem 2.3]{FM} there exist $\lambda,\mu\in\Lambda_n$ such that $p_1(\lambda)=p_1(\mu)$ and 
$p_2(\lambda)\neq p_2(\mu)$. Let $\uX,\uY\in\mtx{n}^d$ be such that $h(\uX)u=\lambda u$ and $h(\uY)v=\mu v$ for some nonzero $u,v\in\kk^n$.
Then $\left(\begin{smallmatrix}u\\v\end{smallmatrix}\right)\in\kk^{2n}$ is an eigenvector of $(p_1(h))(\uX)\oplus (p_1(h))(\uY)=f(\uX\oplus\uY)$, but not of $(p_2(h))(\uX)\oplus (p_2(h))(\uY)=g(\uX\oplus\uY)$, which is a contradiction. Therefore $p_2\in\kk[p_1]$, and so $g\in\kk[f]$.
\end{proof}

For an alternative geometric counterpart of noncommutative polynomial composition in terms of eigenlevel sets, see \cite[Corollary 4.1]{Vol}.

\section{Determinantal zeros}\label{sec4}

Directional zeros from Section \ref{sec3} capture a lot of structure (which is the reason behind the neat form of Theorem \ref{t:berg}); however, sometimes one would like to be able to draw conclusions based on less information.
Given $f_1,\dots,f_\ell\in\px$ let
$$\cZ^{\operatorname{det}}(f_1,\dots,f_\ell) = \bigcup_{n\in\N}
\left\{
\uX\in\mtx{n}^d\colon \det f_j(\uX)=0 \ \forall j
\right\}.
$$
Note that the \emph{determinantal zeros} $\cZ^{\operatorname{det}}(f_1,\dots,f_\ell)$ are the projection of directional zeros $\cZ^{\operatorname{dir}}(f_1,\dots,f_\ell)$ onto the first component. The motivation behind determinantal zeros comes from noncommutative inequalities. To see this, for a moment assume that $f\in \C\!\Langle \ux\Rangle$ is self-adjoint (i.e., $f$ is fixed by the canonical skew-linear involution on $\C\!\Langle \ux\Rangle$ that fixes the generators). The solution set of a dimension-free matrix inequality
$$\cD(f)=\bigcup_{n\in\N}
\left\{
\uX\in\opm_n(\C)^d\colon X_i=X_i^*,\ f(\uX) \text{ is positive semidefinite}
\right\}$$
is called a \emph{free semialgebraic set} \cite{HKM}; they emerge in control theory, matrix convexity and quantum information theory.
Note that at a boundary point of $\cD(f)$, the value of $f$ is singular (since at least one eigenvalue changes sign); so one can thus view $\cZ^{\operatorname{det}}(f)$ as the ``Zariski closure'' of $\cD(f)$. Often, understanding $\cZ^{\operatorname{det}}(f)$ (a collection of determinantal hypersurfaces) suffices for deducing features of $\cD(f)$.

A simple observation $\cZ^{\operatorname{det}}(f_1f_2)=\cZ^{\operatorname{det}}(f_1)\cup\cZ^{\operatorname{det}}(f_2)$ indicates that determinantal zeros relate to factorization in the free algebra \cite{Coh}. Clearly, every $f\in\px$ can be written as a finite product of irreducible factors in $\px$. However, such a factorization might not be unique; for example, $x_1(x_2x_1+1)=(x_1x_2+1)x_1$. This issue is roughly due to the fact that there are not many invertible elements in $\px$. To ameliorate this, one says that $f,g\in\px$ are \emph{stably associated} \cite[Section 0.5]{Coh} if 
$\left(\begin{smallmatrix}f&0\\0&1\end{smallmatrix}\right)=
P\left(\begin{smallmatrix}g&0\\0&1\end{smallmatrix}\right)Q$ for some $P,Q\in\GL_2(\px)$. For example, $x_1x_2+1$ and $x_2x_1+1$ are stably associated because
$$\begin{pmatrix}x_1x_2+1 & 0 \\ 0 & 1\end{pmatrix}=
\begin{pmatrix}x_1 & 1+x_1x_2 \\ -1 & -x_2\end{pmatrix}
\begin{pmatrix}x_2x_1+1 & 0 \\ 0 & 1\end{pmatrix}
\begin{pmatrix}-x_2 & -1 \\ 1+x_1x_2 & x_1\end{pmatrix}.
$$
Notice that $\cZ^{\operatorname{dir}}(f)=\cZ^{\operatorname{dir}}(g)$ for stably associated $f$ and $g$.
By \cite[Proposition 3.2.9]{Coh}, irreducible factors in a complete factorization of a noncommutative polynomial are unique up to stable associativity. With this notion at hand, one obtains the following correspondence between determinantal zeros and factorization in the free algebra.

\begin{thm}[Helton-Klep-Vol\v{c}i\v{c}]\label{t:HKV1}
$\cZ^{\operatorname{det}}(f_1,\dots,f_\ell)\subseteq\cZ^{\operatorname{det}}(g)$ if and only if for some $j\in\{1,\dots,\ell\}$, every irreducible factor of $f_j$ is stably associated to an irreducible factor of $g$.
\end{thm}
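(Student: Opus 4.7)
The proof reduces to separating irreducible factors of $f_1,\dots,f_\ell,g$ via their dimension-free matricial evaluations. For the easy direction $(\Leftarrow)$, fix $j$ whose every irreducible factor is stably associated to one of $g$, and factor $f_j=p_1\cdots p_k$ into irreducibles in $\px$. Multiplicativity of the determinant gives $\cZ^{\operatorname{det}}(f_j)=\bigcup_i\cZ^{\operatorname{det}}(p_i)$. Stable associativity preserves determinantal zeros: if $\operatorname{diag}(p,1)=P\operatorname{diag}(q,1)Q$ with $P,Q\in\GL_2(\px)$, then $P(\uX),Q(\uX)$ are always invertible $2n\times 2n$ matrices, so $\det p(\uX)$ and $\det q(\uX)$ have the same zero locus on $\mtx{n}^d$. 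Consequently each $\cZ^{\operatorname{det}}(p_i)$ coincides with the determinantal zero set of an irreducible factor of $g$ and lies in $\cZ^{\operatorname{det}}(g)$, giving $\cZ^{\operatorname{det}}(f_1,\dots,f_\ell)\subseteq\cZ^{\operatorname{det}}(f_j)\subseteq\cZ^{\operatorname{det}}(g)$.

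The harder direction $(\Rightarrow)$ is argued by contrapositive: suppose for each $j$ there is an irreducible factor $p_j$ of $f_j$ that is not stably associated to any irreducible factor of $g$; the task is to produce a single $\uX$ at which every $f_j$ is singular but $g$ is invertible. The crucial input is a separation lemma:

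\textbf{Key Lemma.} \emph{If $p\in\px$ is irreducible and $q\in\px$ has no irreducible factor stably associated to $p$, then there exists $\uY\in\mtx{n}^d$ with $p(\uY)$ singular and $q(\uY)$ invertible.}

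Granted this, apply it to $(p_j,g)$ for each $j$ to obtain tuples $\uY_j\in\mtx{n_j}^d$ making $p_j(\uY_j)$ singular and $g(\uY_j)$ invertible, and set $\uX=\uY_1\oplus\cdots\oplus\uY_\ell$. For each $j$, the evaluation $p_j(\uX)=\bigoplus_k p_j(\uY_k)$ contains the singular block $p_j(\uY_j)$, so $f_j(\uX)$, which has $p_j(\uX)$ as a factor, is singular; hence $\uX\in\cZ^{\operatorname{det}}(f_j)$ for all $j$. On the other hand, $g(\uX)=\bigoplus_k g(\uY_k)$ is invertible as a block-diagonal sum of invertibles, so $\uX\notin\cZ^{\operatorname{det}}(g)$, contradicting the inclusion assumed in the theorem.

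The principal obstacle is the Key Lemma, which is the algebro-geometric fact that matricial determinantal zero sets faithfully distinguish irreducible noncommutative polynomials modulo stable associativity. The natural route is through Cohn's theory of full matrices and the universal skew field of fractions $\mathcal{U}$ of $\px$: to an irreducible $p$ one attaches a companion/hollow-type matrix evaluation $\uY$ at which $p$ becomes singular, and shows that any irreducible $q$ not stably associated to $p$ remains full in the relevant localization, so that $q(\uY)$ is invertible on a Zariski-dense subset of the associated free locus; a density argument together with the direct-sum trick then upgrades this to simultaneous invertibility of all irreducible factors of $g$ on a common tuple. This is where the dimension-free machinery of free loci genuinely enters, and essentially all the technical work of the theorem is absorbed into the Key Lemma; once it is in hand, the block-diagonal reduction above finishes the argument cleanly.
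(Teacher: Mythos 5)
Your easy direction $(\Leftarrow)$ is correct, and your direct-sum reduction of the hard direction to the Key Lemma is also correct and matches the paper's approach in spirit: the paper cites \cite[Proposition 2.14]{HKV1} to get from $\cZ^{\operatorname{det}}(f_1,\dots,f_\ell)\subseteq\cZ^{\operatorname{det}}(g)$ to $\cZ^{\operatorname{det}}(f_j)\subseteq\cZ^{\operatorname{det}}(g)$ for a single $j$, and then invokes the $\ell=1$ irreducible case; your block-diagonal construction performs essentially the same two reductions in one step. That part is fine.

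The genuine gap is the Key Lemma itself, which is the entire technical content of the theorem, and you have not proved it. Your sketch gestures at Cohn's theory of full matrices and localization, but as written it does not close the argument. First, the claim that ``$q$ remains full in the relevant localization, so $q(\uY)$ is invertible on a Zariski-dense subset of the associated free locus'' is not a consequence of abstract localization facts: one has to know that the free locus $\{\uX : \det p(\uX)=0\}$ at a fixed size $n$ is an \emph{irreducible} hypersurface for $n$ large (so that ``Zariski-dense in the locus'' is a meaningful notion and a single non-vanishing point suffices), and that two irreducibles $p,q$ have eventually equal free loci if and only if they are stably associated. Those are precisely the hard results of \cite{HKV} and \cite[Theorem 2.12]{HKV1}, and they are proved not via Cohn's skew field of fractions but through linearization (companion-type pencils attached to $p$) combined with the invariant theory of the $\GL_n$-action by simultaneous conjugation and the $\SL_n\times\SL_n$-action by simultaneous left-right multiplication, plus ampliation arguments to pass between matrix sizes. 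Your proposal neither states the irreducibility of free loci nor explains how stable associativity (a $\GL_2(\px)$-condition) is extracted from equality of determinantal hypersurfaces; without these, the Key Lemma is an assertion, not a proof. In short: the scaffolding around the Key Lemma is correct and essentially the paper's, but the Key Lemma itself --- where all the work lives --- is left open, and the route you gesture at omits the invariant-theoretic input that actually makes it go.
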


Theorem \ref{t:HKV1} reduces to the case $\ell=1$ after observing that $\cZ^{\operatorname{det}}(f_1,\dots,f_\ell)
\subseteq\cZ^{\operatorname{det}}(g)$ implies $\cZ^{\operatorname{det}}(f_j)
\subseteq\cZ^{\operatorname{det}}(g)$ for some $j$ \cite[Proposition 2.14]{HKV1}. This might seem counter-intuitive at first, and it is due to determinantal zeros being ubiquitous in a certain sense. 
After this reduction, the proof of Theorem \ref{t:HKV1} is given in \cite[Theorem 2.12]{HKV1}, building upon earlier work in \cite{HKV}. The proof differs prominently from those in Sections \ref{sec2} and \ref{sec3}. On one hand, its framework relies on linearization of matrices over the free algebra, and its interplay with factorization. On the other hand, the proof combines results from invariant and representation theory associated with the actions of $\GL_n(\kk)$ on $\mtx{n}^d$ by simultaneous conjugation and the action of $\SL_n(\kk)\times \SL_n(\kk)$ on $\mtx{n}^d$ by simultaneous left-right multiplication, with ampliation techniques from free analysis.

\section{Tracial and weak zeros}\label{sec5}

This section concerns vanishing of linear functionals on images of noncommutative polynomials, which gives it a slight operator-algebraic flavor.
For $f_1,\dots,f_\ell\in\px$ let
$$\cZ^{\operatorname{tr}}(f_1,\dots,f_\ell) = \bigcup_{n\in\N}
\left\{
\uX\in\mtx{n}^d\colon \tr f_j(\uX)=0 \ \forall j
\right\}.
$$
When considering the above \emph{tracial zeros}, one begins by noticing that if $f\in\px$ is a sum of commutators in $\px$, then $\tr f(\uX)=0$ for all matrix tuples $\uX$. The converse is also true \cite[Corollary 4.7]{BK}, which is established using multilinearization and the structure of Lie ideals in $\mtx{n}$. Thus when looking for algebraic certificates of tracial zero inclusions, one should seek for relations between noncommutative polynomials that are satisfied ``up to adding commutators''. In particular, the following Nullstellensatz is given in \cite[Theorem 3.1]{KS}.

\begin{thm}[Klep-\v{S}penko]\label{t:KS}
$\cZ^{\operatorname{tr}}(f_1,\dots,f_\ell)\subseteq\cZ^{\operatorname{tr}}(g)$ if and only if $1$ or $g$ is a linear combination of $f_1,\dots,f_\ell$ and commutators in $\px$.
\end{thm}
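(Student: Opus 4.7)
The $(\Leftarrow)$ direction is a direct verification: if $g = \sum_{j=1}^\ell \lambda_j f_j + c$ with $c$ a sum of commutators, applying $\tr$ (which annihilates commutators) gives $\tr g(\uX) = \sum_j \lambda_j \tr f_j(\uX)$, which vanishes on $\cZ^{\operatorname{tr}}(f_1,\dots,f_\ell)$; if instead $1 = \sum_j \lambda_j f_j + c$, then $n = \sum_j \lambda_j \tr f_j(\uX)$ on $\mtx{n}^d$, so membership in $\cZ^{\operatorname{tr}}(f_1,\dots,f_\ell)$ would force $n=0$, making this set empty and the inclusion vacuous. For $(\Rightarrow)$ I argue the contrapositive: letting $C\subset\px$ denote the $\kk$-span of commutators and $V=\spa\{f_1,\dots,f_\ell\}+C$, assume $1\notin V$ and $g\notin V$, and construct a witness in $\cZ^{\operatorname{tr}}(f_1,\dots,f_\ell)\setminus \cZ^{\operatorname{tr}}(g)$. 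After discarding any $f_j$ that is a linear combination of the others modulo $C$ (which preserves $\cZ^{\operatorname{tr}}$), I may assume $f_1,\dots,f_\ell$ are linearly independent modulo $C$; the hypothesis $1\notin V$ then promotes this to linear independence of $1, f_1,\dots,f_\ell$ modulo $C$.

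Next, distinguish two subcases according to whether $g\in V+\kk\cdot 1$. If so, write $g=\alpha\cdot 1+v$ with $\alpha\in\kk\setminus\{0\}$ and $v\in V$, so that $\tr g(\uX)=\alpha n+\sum_j\lambda_j \tr f_j(\uX)$; it then suffices to exhibit any $\uX\in\cZ^{\operatorname{tr}}(f_1,\dots,f_\ell)\cap\mtx{n}^d$ with $n\geq 1$, since at such a point $\tr g(\uX)=\alpha n\neq 0$. Otherwise $1, f_1,\dots,f_\ell, g$ are linearly independent modulo $C$, and one must produce $\uX$ with $\tr f_j(\uX)=0$ for all $j$ and $\tr g(\uX)\neq 0$ directly.

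Both subcases reduce to the following key lemma, which I expect to be the crux: if $1, h_1,\dots,h_k\in\px$ are linearly independent modulo $C$, then for $n$ sufficiently large, $\tr h_1,\dots,\tr h_k\in\kk[\mtx{n}^d]$ are algebraically independent over $\kk$. Granting this and choosing such an $n$, the polynomial map $\Phi_n\colon\mtx{n}^d\to\kk^k$, $\uX\mapsto(\tr h_j(\uX))_j$, is dominant, so by Chevalley its image contains a nonempty Zariski open set $U\subseteq\kk^k$. Since $\kk^k$ is irreducible, for every $x\in\kk^k$ the two nonempty opens $U$ and $x-U$ intersect, giving $x=u+(x-u)\in U+U$; hence $U+U=\kk^k$. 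The direct-sum identity $\tr h(\uX\oplus\uY)=\tr h(\uX)+\tr h(\uY)$ shows that the set of attainable trace vectors $\{(\tr h_j(\uX))_j\colon \uX\in\mtx{m}^d,\, m\in\N\}$ is closed under addition, so it contains $U+U=\kk^k$. Selecting the vector $(0,\dots,0)\in\kk^\ell$ in the first subcase or $(0,\dots,0,1)\in\kk^{\ell+1}$ in the second delivers the desired witness.

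The main obstacle is proving the key lemma. The plan is to invoke Procesi--Razmyslov theory: $\kk[\mtx{n}^d]^{\GL_n}$ is generated by traces of words (Procesi), and the kernel of the natural map from the pure-trace polynomial algebra (the symmetric algebra on nonempty cyclic classes of words) to this invariant ring is trivial in any bounded degree once $n$ is sufficiently large. Since linear independence of $1, h_1,\dots,h_k$ modulo $C$ is equivalent to linear independence of $h_1,\dots,h_k$ in the quotient $\px/(C+\kk\cdot 1)$, the bounded-degree injectivity translates directly into algebraic independence of $\tr h_1,\dots,\tr h_k$ for $n$ large compared to $\max_j\deg h_j$.
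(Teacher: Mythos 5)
Your proof has the right architecture and uses the same two core ingredients as the cited proof of Klep and \v{S}penko (bounds on trace identities from invariant theory, and the Bre\v{s}ar--Klep characterization of trace-zero polynomials as sums of commutators); the route you take through the ``second case'' differs slightly from the reduction-to-the-empty-case described in the survey, but your case split, the Chevalley/constructibility argument showing $U+U=\kk^k$, and the direct-sum trick all hold up. The $(\Leftarrow)$ direction and the two-subcase reduction to the key lemma are also correct.

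However, there is a genuine gap in your proof of the key lemma. The Procesi--Razmyslov result you invoke gives \emph{bounded-degree} injectivity of the map $\Psi\colon \kk[t_{[w]}]\to T_{n,d}$: no nonzero element of total word degree at most $n$ lies in $\ker\Psi$. What you actually need is injectivity of $\Psi$ on the \emph{subring} $R_D=\kk[t_{[w]}\colon |w|\le D]$, i.e.\ that $\ker\Psi\cap R_D=0$. These are not the same: an element of $R_D$ may have arbitrarily large total word degree (it can be a high-degree polynomial in low-degree traces), so bounded-degree injectivity does not ``directly translate'' into algebraic independence of $\tr h_1,\dots,\tr h_k$. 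You need an extra step. One way to close it: set $J_n=\ker\Psi_n\cap R_D$, observe that $J_{n+1}\subseteq J_n$ via the embedding $\uX\mapsto\uX\oplus 0$ of $\mtx{n}^d$ into $\mtx{n+1}^d$ (which preserves $\tr w$ for nonempty $w$), so by Noetherianity of $R_D$ the chain stabilizes at some $J_N$; then any nonzero $P\in J_N$ would be a pure trace identity of a \emph{fixed} total word degree valid for all $n\ge N$, contradicting bounded-degree injectivity once $n$ exceeds that fixed degree. Hence $J_N=0$, which is the algebraic independence you want. Without this (or an equivalent) bootstrap, the argument as written does not establish the key lemma. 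Note that this yields an existential ``some $n$ large enough'' rather than the explicit bound ``$n$ large compared to $\max_j\deg h_j$'' that you assert.
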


While Theorem \ref{t:KS} seemingly consists of two separate cases, $\cZ^{\operatorname{tr}}(f_1,\dots,f_\ell)=\emptyset$ and $\cZ^{\operatorname{tr}}(f_1,\dots,f_\ell)\neq\emptyset$, the proof of \cite[Theorem 3.1]{KS} reduces the latter one to the former one. 
The core arguments of \cite{KS} then utilize bounds on trace identities provided by invariant theory, together with the characterization of trace-zero noncommutative polynomials as sums of commutators \cite{BK}. 

Apart from commutators (indispensable when considering tracial zeros), the other component of Theorem \ref{t:KS} is linear dependence. One can also explore it separately; 
\cite[Theorem 3.7]{BK1} establishes a local-global principle for linear dependence in the free algebra.
Namely, $f_1,\dots,f_\ell\in\px$ are linearly dependent if and only if for all $n\in\N$, $\uX\in\mtx{n}^d$ and $v\in\kk^n$, the vectors $f_1(\uX)v,\dots,f_\ell(\uX)v$ are linearly independent. The proof of \cite[Theorem 3.7]{BK1} 
combines a characterization of linear dependence based on vanishing of Capelli's polynomial,
and a rank constraint on locally linearly dependent operators.
As observed in \cite{Cim}, this local-global linear dependence principle can be interpreted as a Nullstellensatz.
Let
$$\cZ^{\operatorname{w}}(f_1,\dots,f_\ell) = \bigcup_{n\in\N}
\left\{
(\uX,u,v)\in\mtx{n}^d\times\kk^n\times\kk^n\colon u^\ti f_j(\uX)v=0 \ \forall j
\right\}
$$
be the set of \emph{weak zeros} of $f_1,\dots,f_\ell\in\px$ (named in analogy with the weak topology in functional analysis). Then one obtains the following.

\begin{thm}[Bre\v{s}ar-Klep; Cimpri\v{c}]\label{t:BK}
$\cZ^{\operatorname{w}}(f_1,\dots,f_\ell)\subseteq\cZ^{\operatorname{w}}(g)$ if and only if $g$ is a linear combination of $f_1,\dots,f_\ell$.
\end{thm}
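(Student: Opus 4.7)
The plan is to reduce the theorem to the local-global linear dependence principle of Bre\v{s}ar--Klep \cite[Theorem 3.7]{BK1} quoted just above the statement, using only elementary linear algebra on top of it. The easy direction is immediate: if $g=\sum_j c_j f_j$ for some $c_j\in\kk$, then $u^\ti g(\uX)v=\sum_j c_j u^\ti f_j(\uX)v$, so vanishing of every $u^\ti f_j(\uX)v$ forces $u^\ti g(\uX)v=0$.

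For the nontrivial direction I would argue by contrapositive. Assume $g\notin\spa(f_1,\dots,f_\ell)$. First pick a basis $f_{j_1},\dots,f_{j_m}$ of the $\kk$-span of $f_1,\dots,f_\ell$; since $g$ lies outside this span, the polynomials $g,f_{j_1},\dots,f_{j_m}$ are linearly independent in $\px$. Apply the local-global principle to this $(m+1)$-tuple to produce $n\in\N$, a tuple $\uX\in\mtx{n}^d$ and a vector $v\in\kk^n$ such that the vectors $g(\uX)v,\,f_{j_1}(\uX)v,\dots,f_{j_m}(\uX)v\in\kk^n$ are linearly independent.

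The remaining step is purely linear algebra. Linear independence of these $m+1$ vectors in $\kk^n$ is equivalent to surjectivity of the dual map $\kk^n\to\kk^{m+1}$ given by $u\mapsto (u^\ti g(\uX)v,\,u^\ti f_{j_1}(\uX)v,\dots,u^\ti f_{j_m}(\uX)v)$. Hence one can pick $u\in\kk^n$ with $u^\ti g(\uX)v=1$ and $u^\ti f_{j_k}(\uX)v=0$ for every $k$. Writing each $f_j$ as a $\kk$-linear combination of $f_{j_1},\dots,f_{j_m}$ then yields $u^\ti f_j(\uX)v=0$ for all $j=1,\dots,\ell$, so $(\uX,u,v)\in\cZ^{\operatorname{w}}(f_1,\dots,f_\ell)\setminus\cZ^{\operatorname{w}}(g)$, contradicting the hypothesis.

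Granted \cite[Theorem 3.7]{BK1}, the entire argument above is a routine dualization of linear independence, so the genuine obstacle is already absorbed into that principle: it is there that one must upgrade the pointwise linear dependence of the matrix-vector values $f_j(\uX)v$ (over all $\uX$ and $v$) to linear dependence of the abstract polynomials, which requires the Capelli-polynomial characterization of linear dependence together with the rank bound on locally linearly dependent families of operators alluded to in the paragraph preceding the theorem.
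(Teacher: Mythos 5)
Your proof is correct and follows essentially the same route as the paper: both reduce the statement to the local-global linear dependence principle of Bre\v{s}ar--Klep via dualization against the standard bilinear form on $\kk^n\times\kk^n$, your version merely organizing the argument contrapositively (producing an explicit witness $(\uX,u,v)$) where the paper argues directly from the inclusion of annihilators.
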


The following argument for Theorem \ref{t:BK} is due to Cimpri\v{c}, given in the context of partial differential operators \cite{Cim}. It suffices to assume that $f_1,\dots,f_\ell$ are linearly independent. If $\cZ^{\operatorname{w}}(f_1,\dots,f_\ell)\subseteq\cZ^{\operatorname{w}}(g)$, then for all $n\in\N$, $\uX\in\mtx{n}^d$ and $v\in\kk^n$, the complement of the span of $f_1(\uX)v,\dots,f_\ell(\uX)v$ is contained in the complement of $g(\uX)v$ (here, the complements are considered relative to the standard bilinear form $\kk^n\times\kk^n\to\kk$). Therefore $f_1(\uX)v,\dots,f_\ell(\uX)v,g(\uX)v$ are linearly dependent for every $\uX$ and $v$. By \cite[Theorem 3.7]{BK1}, it follows that $f_1,\dots,f_\ell,g$ are linearly dependent, so $g$ belongs to the span of $f_1,\dots,f_\ell$.

\section{Low-rank values}\label{sec6}

As observed in Section \ref{sec2}, (nonconstant) noncommutative polynomials might not have any matricial zeros, which is an essential barrier to obtaining an exact correspondence between matricial zero sets of noncommutative polynomials and ideals in the free algebra.
For example, $f=1-[x_1,x_2]$ does not have matricial zeros. Nevertheless, $f$ can attain low-rank values on matrix tuples. Concretely, for every $n\in\N$ consider the pair of $n\times n$ matrices
\begin{equation}\label{e:weyl}
X_n=
\begin{pmatrix}
0&1&0&&\\
&0&1&\ddots&\\
&&\ddots&\ddots &0\\
&&&\ddots&1\\
&&&&0
\end{pmatrix},\quad
Y_n=
\begin{pmatrix}
0&&&&\\
1&0&&&\\
0&2&\ddots&&\\
&\ddots&\ddots&\ddots&\\
&&0&n-1&0
\end{pmatrix}.
\end{equation}
A direct calculation shows that $\rk f(X_n,Y_n)=1$ for all $n\in\N$. In particular, while $f$ does not have matricial zeros, the relative rank $\frac1n \rk f(\uX)$ for $\uX\in\mtx{n}^2$ can be arbitrarily small.
The pair of matrices \eqref{e:weyl} is constructed by looking at the canonical representation of the Weyl algebra on $\C[t]$, where $x_1$ acts as multiplication by $t$ and $x_2$ acts as differentiation with respect to $t$, and then truncating compressing these operators to polynomials of degree at most $n-1$.
With such examples (and a previous investigation of noncommutative polynomial zeros in algebraically closed skew fields \cite{ML}) in mind, Makar-Limanov proposed\footnote{
Per author's account, during a talk in Advances in Noncommutative Algebra and Representation Theory at Bar-Ilan University in 2017; but likely also on prior occasions.
} the following statement.

\begin{conj}[Makar-Limanov]\label{c:ML}
If $f\in\px\setminus\kk$ then
$$\inf_{n\in\N} \frac{\min \rk f\left(\mtx{n}^d\right)}{n}=0.$$
\end{conj}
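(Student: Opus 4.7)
The plan is to generalize the Weyl-algebra construction that handles $f=1-[x_1,x_2]$ to an arbitrary nonconstant $f\in\px$. The strategy is three-fold. \emph{First}, exhibit an infinite-dimensional associative $\kk$-algebra $A$ with distinguished elements $a_1,\dots,a_d\in A$ satisfying $f(a_1,\dots,a_d)=0$. \emph{Second}, realize $A$ faithfully on a filtered vector space $V=\bigcup_{n\in\N}V_n$ with $\dim V_n=n$, in such a way that each $a_i$ has bounded propagation, meaning $a_iV_k\subseteq V_{k+c}$ for some fixed $c$ independent of $k$. \emph{Third}, pass to the compressions $X_i^{(n)}:=P_n a_i|_{V_n}\in\mtx{n}$, where $P_n$ denotes the projection onto $V_n$ along a complement, and estimate the resulting truncation error.

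The compression step is a routine telescoping estimate. If each $a_i$ has propagation at most $c$ and $\deg f\le D$, then for every $v\in V_{n-cD}$ the intermediate products $a_{i_k}\cdots a_{i_1}v$ stay inside $V_n$ for $k\le D$, so no truncation occurs and $f(\uX^{(n)})v=P_n f(a_1,\dots,a_d)v=0$. It follows that $\ker f(\uX^{(n)})\supseteq V_{n-cD}$, hence $\rk f(\uX^{(n)})\le cD$, a constant independent of $n$, and therefore $\rk f(\uX^{(n)})/n\to 0$ as $n\to\infty$, establishing Conjecture~\ref{c:ML}.

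The heart of the difficulty is the existence step. On the abstract side, Makar-Limanov's prior work~\cite{ML} already guarantees that every nonconstant $f$ has a root in some algebraically closed skew-field extension of $\px$, so roots exist at the purely algebraic level. The constructive obstacle is that abstract skew fields carry no canonical filtered-module structure: one needs a \emph{filtered} algebra in which $f$ vanishes and which acts on a polynomial-growth module with bounded propagation. Natural candidates are iterated Ore extensions, generalized Weyl algebras in the sense of Bavula, and universal enveloping algebras of nilpotent or solvable Lie algebras, all of which come equipped with canonical polynomial-type modules; the task is to show that an arbitrary nonconstant $f$ can be realized inside some such structure.

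Several partial reductions simplify the picture and locate the hard case. If $f$ is homogeneous of positive degree then $f(0,\dots,0)=0$ in every $\mtx{n}$, so the conjecture is trivial. If $f$ generates a rationally resolvable ideal, then $\cZ(f)$ is nonempty at arbitrarily large matrix sizes by Theorem~\ref{t:KVV1}, and the conjecture again follows. The genuinely difficult case is $f$ with empty matricial zero set — such as $1-[x_1,x_2]$, or more generally those $f$ for which $\px/(f)$ admits no nontrivial finite-dimensional representations. A natural first target beyond the Weyl example is an $f$ of the form $\lambda-g$ with $g$ homogeneous of positive degree and $\lambda\in\kk$ nonzero: the goal becomes realizing $g$ as acting by the scalar $\lambda$ on a high-codimension subspace of some filtered module, a problem intimately tied to Gelfand-Kirillov dimension bounds and growth rates of quotients of the free algebra.
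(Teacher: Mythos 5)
This is a \emph{conjecture}, and the paper offers no proof of it; there is no ``paper's own proof'' to compare against. What you have written is, as you are candid about, a research plan rather than a proof, so let me assess it as such.

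The parts you actually carry out are correct. The compression estimate is exactly right: if each $a_i$ has propagation $a_iV_k\subseteq V_{k+c}$ and $\deg f\le D$, then for $v\in V_{n-cD}$ all intermediate products of length $\le D$ stay in $V_n$, no truncation by $P_n$ occurs, and $f(\uX^{(n)})v = f(a_1,\dots,a_d)v = 0$; hence $\rk f(\uX^{(n)})\le cD$, a constant. This is precisely the mechanism behind the paper's Weyl example \eqref{e:weyl}, where $A$ is the first Weyl algebra, $V=\kk[t]$, $V_n$ is the span of $1,t,\dots,t^{n-1}$, and $c=1$, giving the stated $\rk f(X_n,Y_n)=1$. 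Your dispatch of the homogeneous case (evaluate at the zero tuple) and of the rationally-resolvable case (Theorem \ref{t:KVV1} with $g=1$ gives a matricial zero, which ampliates by direct sums to infinitely many sizes) is also sound.

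The gap is the one you name yourself: the \emph{existence step}. For the argument to run, you need, for every nonconstant $f$, a $\kk$-algebra $A$ together with a filtered module $V=\bigcup_n V_n$ of \emph{linear} growth ($\dim V_n\approx n$) on which the generators act with bounded propagation and $f$ acts as zero. Nothing you cite supplies this. Makar-Limanov's result on algebraically closed skew fields produces an element of a huge skew field at which $f$ vanishes, but a skew field gives you no filtered module of polynomial growth: its only finitely generated module is itself, and that is far too large. The lists you offer (Ore extensions, generalized Weyl algebras, enveloping algebras of solvable Lie algebras) are plausible ambient structures, but there is no theorem saying every nonconstant $f$ with $\cZ(f)=\emptyset$ can be realized inside one of them with the required growth and propagation bounds; indeed, the paper's own second example $f=1-[x_1,[x_1,x_2]^2]$ is handled only by ad hoc Gr\"obner computation at sizes $3$ and $4$, with no structural construction behind it. So the proposal does not establish the conjecture, which remains open; but the framework you lay out (reduce to finding a low-GK-dimension filtered module annihilating $f$, then compress) is a reasonable and correctly formalized way to organize an attack, and it cleanly isolates exactly where the difficulty sits.

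Two small cautions. First, the rationally-resolvable reduction needs $(f)\neq\px$, which holds for nonconstant $f$ but is worth saying. Second, your approach at best yields a bounded rank $\rk f(\uX^{(n)})\le cD$, which suffices for Conjecture \ref{c:ML}, but would not by itself address the paper's sharper speculative question of whether $\min\rk f(\mtx{n}^d)\le 1$ whenever $f$ is nonconstant on $\mtx{n}^d$.
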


Let us look at another example. The noncommutative polynomial $f=1-[x_1,[x_1,x_2]^2]$ is constantly equal to the identity matrix on all pairs of $1\times 1$ and $2\times 2$ matrices (note that $[X_1,X_2]^2=-\det([X_1,X_2])I$ for $\uX\in\mtx{2}^2$).
On the other hand, for
$$
X_3=
\begin{pmatrix}
 0 & -2 & 0 \\
 \tfrac{1}{6} & 0 & -4 \\
 0 & \tfrac{1}{6} & 0
\end{pmatrix},\quad 
Y_3=
\begin{pmatrix}
 0 & 3 & 0 \\
 \tfrac{1}{4} & 0 & 0 \\
 0 & 0 & \tfrac{3}{2}
\end{pmatrix}
$$
and
$$
X_4=
\begin{pmatrix}
 0 & 1 & 1 & 0 \\
 \tfrac{5}{9} & 0 & -\tfrac{5}{3} & -1 \\
 0 & \tfrac{2}{15} & 0 & -\tfrac{1}{5} \\
 0 & 0 & -\tfrac{5}{3} & 0 
\end{pmatrix},\quad 
Y_4=
\begin{pmatrix}
 0 & -\tfrac{6}{5} & \tfrac{21}{2} & 0 \\
 0 & 0 & -10 & \tfrac{3}{2} \\
 \tfrac{1}{3} & \tfrac{1}{5} & 0 & \tfrac{3}{10} \\
 0 & 0 & \tfrac{5}{2} & 2 
\end{pmatrix}
$$
one has $\rk f(X_3,Y_3)=\rk f(X_4,Y_4)=1$. There is no refined construction behind these two pairs of matrices; they were obtained by solving the system of polynomial equations $f(X_n,Y_n)=nE_{nn}$ (where $E_{nn}$ is a standard matrix unit) using Gr\"obner bases and some ad hoc reductions.
While presenting scarce evidence, this example nevertheless suggests a strengthening of Conjecture \ref{c:ML}: for $f\in\px$, is it true that $\min\rk f(\mtx{n}^d)\le1$ if $f$ is not constant on $\mtx{n}^d$? This question is possibly more amenable to refutation than Conjecture \ref{c:ML}.
Be that as it may, a positive resolution of Conjecture \ref{c:ML} would indicate that \emph{low-rank values} of noncommutative polynomials might be a suitable concept for developing algebraic geometry of noncommutative polynomials on matrices of arbitrary sizes that corresponds to two-sided ideals in $\px$. Namely, one might go further than Conjecture \ref{c:ML} and ask whether the following assertion is true.

\begin{conj}\label{c:true}
For $f_1,\dots,f_\ell,g\in\px$, the following are equivalent:
\begin{enumerate}[(i)]
    \item there is $K\in\N$ such that for all $n\in\N$ and $\uX\in\mtx{n}^d$, 
    $$\rk g(\uX)\le K\cdot\max\{\rk f_1(\uX),\dots,\rk f_\ell(\uX)\};$$
    \item $g\in (f_1,\dots,f_\ell)$.
\end{enumerate}
\end{conj}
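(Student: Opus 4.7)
$(ii) \Rightarrow (i)$ is immediate from subadditivity and submultiplicativity of matrix rank. If $g = \sum_{k=1}^K a_k f_{j_k} b_k$ with $a_k, b_k \in \px$ and $j_k \in \{1,\dots,\ell\}$, then for every $\uX \in \mtx{n}^d$,
$$\rk g(\uX) \le \sum_{k=1}^K \rk\!\bigl(a_k(\uX)\,f_{j_k}(\uX)\,b_k(\uX)\bigr) \le \sum_{k=1}^K \rk f_{j_k}(\uX) \le K\,\max_j \rk f_j(\uX),$$
with $K$ (the number of terms in the expansion) independent of $n$.

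For (i) $\Rightarrow$ (ii) I would argue by contraposition: assuming $g \notin I := (f_1,\dots,f_\ell)$, the plan is to construct matrix tuples along which $\rk g(\uX)/\max_j \rk f_j(\uX)$ is unbounded, using a truncated-module construction analogous to those in the proofs of Theorems \ref{t:SSS} and \ref{t:berg} but adapted to the two-sided quotient $A := \px/I$. Let $\bar g \in A$ denote the nonzero image of $g$, equip $A$ with the degree filtration $\{F_N A\}$ inherited from $\px$, and set $\delta = \max(\deg g, \deg f_1,\dots,\deg f_\ell)$. For each $N > \delta$, fix a $\kk$-linear projection $\pi_N : F_{N+1} A \twoheadrightarrow F_N A$, put $V_N = F_N A$, and define $X_i^{(N)} : V_N \to V_N$ by $X_i^{(N)} v = \pi_N(x_i \cdot v)$ with the product computed in $A$. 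A routine induction on $\deg p$ then shows $p(\uX^{(N)}) v = p \cdot v$ in $A$ for every $p \in \px$ and $v \in F_{N-\deg p} A$. Since $f_j \in I$, the operator $f_j(\uX^{(N)})$ annihilates $F_{N-\delta} A$, so
$$\rk f_j(\uX^{(N)}) \le \dim F_N A - \dim F_{N-\delta} A,$$
while $g(\uX^{(N)})$ restricts to left multiplication by $\bar g$ on $F_{N-\delta} A$, yielding
$$\rk g(\uX^{(N)}) \ge \dim\!\bigl(\bar g \cdot F_{N-\delta} A\bigr).$$

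What remains is a growth-theoretic estimate: $\dim(\bar g\cdot F_{N-\delta} A)$ must eventually dominate $\dim F_N A - \dim F_{N-\delta} A$. For finite-dimensional $A$ the filtration stabilizes, the boundary vanishes, and $g(\uX^{(N)})\ne 0$ on $F_{N-\delta} A = A$ via the left-regular representation; the Weyl-algebra case is handled essentially by the construction \eqref{e:weyl}. In general, however, when $A$ has exponential growth the boundary already accounts for a constant fraction of $\dim V_N$, and the degree filtration on $A$ is too coarse. I expect this to be the main obstacle: one would need to replace $V_N$ by a filtration of a carefully chosen faithful $A$-module $M$ whose successive quotients $F_N M / F_{N-\delta} M$ have sublinear dimension relative to $F_N M$ and on which $\bar g$ still acts injectively on most of $F_{N-\delta} M$. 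Producing such an $M$ for every two-sided ideal $I$ is a substantial strengthening of Conjecture \ref{c:ML}, and in full generality will presumably require new ring-theoretic input (e.g. Ore-like or Goldie-type hypotheses on $A$) or else a strategy that bypasses the filtration approach altogether.
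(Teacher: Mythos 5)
This statement is a conjecture, not a theorem; the paper provides no proof of it, and neither do you, so ``comparing with the paper's own proof'' here means comparing with the paper's brief discussion of the problem's status. You get that status exactly right.

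Your (ii)$\Rightarrow$(i) argument is correct and is precisely the elementary rank estimate the paper alludes to in the sentence following the conjecture (where, incidentally, ``then (ii) holds'' is a typo for ``then (i) holds''). Writing $g=\sum_{k=1}^K a_k f_{j_k} b_k$ and applying subadditivity and submultiplicativity of rank gives $\rk g(\uX)\le K\max_j\rk f_j(\uX)$ with $K$ independent of $n$, exactly as you say.

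For (i)$\Rightarrow$(ii) you do not claim a proof, and rightly so: the paper explicitly leaves this direction open and remarks that it would require ``a better understanding of low-rank values of noncommutative polynomials.'' Your filtered-module construction is a sensible and correctly executed attempt in the spirit of the proofs of Theorems \ref{t:SSS} and \ref{t:berg}, and your diagnosis of where it breaks down is accurate. With $A=\px/I$ carrying the degree filtration, you correctly get $\rk f_j(\uX^{(N)})\le \dim F_N A-\dim F_{N-\delta}A$ and $\rk g(\uX^{(N)})\ge\dim(\bar g\cdot F_{N-\delta}A)$; the problem, as you note, is that for a ``small'' ideal $I$ the algebra $A$ has exponential growth (e.g.\ $\dim F_N\px\sim d^N$), so the boundary term $\dim F_N A-\dim F_{N-\delta}A$ is already a constant fraction of $\dim F_N A$ and the inequality in (i) cannot be violated along this family. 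Your observation that $\cZ(f_1,\dots,f_\ell)=\emptyset$ forces one to look at low-rank rather than zero values is also exactly the motivation laid out in Section~\ref{sec6}, including the Weyl-algebra example \eqref{e:weyl}. In short: the easy direction matches the paper, the hard direction remains open, and your account of the obstacle is a faithful (and somewhat more quantitative) elaboration of what the paper itself says about it.
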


The implication (ii)$\Rightarrow$(i) is straightforward: if $g$ is a sum of $K$ terms of the form $pf_jq$ for $p,q\in\px$, then (ii) holds by elementary rank estimates. To approach the converse, it is first necessary to develop a better understanding of low-rank values of noncommutative polynomials, and substantiate their relative abundance.


\end{document}